\documentclass[11pt]{amsart}
\usepackage{amssymb}
\usepackage{amsmath}
\usepackage{amsthm}
\usepackage{color}
\usepackage{enumerate}
\usepackage{xypic}
\usepackage[vmargin=2cm,hmargin=2cm]{geometry}

\newtheorem{theorem}{Theorem}[section]
\newtheorem{lemma}[theorem]{Lemma}

\newtheorem{corollary}[theorem]{Corollary}

\theoremstyle{remark}
\newtheorem{remark}[theorem]{Remark}
\newtheorem{example}[theorem]{Example}

\newcommand\mut[1]{\ignorespaces}

\title{Nonhomogeneous patterns on numerical semigroups}

\keywords{numerical semigroup, pattern on a numerical semigroup, Frobenius variety}
\subjclass[2010]{20M14}

\author[Bras-Amor\'os]{Maria Bras-Amor\'os}
\address{Departament d'Enginyeria Inform\`atica i Matem\`atiques, Av. Pa\"isos Catalans, 26 Campus Sescelades, 43007 Tarragona, Espa\~na}
\email{maria.bras@urv.cat}

\author[Garc\'\i a-S\'anchez]{Pedro A. Garc\'\i a-S\'anchez} 
\address{Departamento de \'Algebra, Universidad de Granada, E-18071 Granada, Espa\~na}
\email{pedro@ugr.es}

\thanks{The first and third authors are supported by the Spanish Government through the projects TIN2012-32757 ``ICWT" and CONSOLIDER INGENIO 2010 CSD2007-00004 ``ARES". The second author is supported by the projects MTM2010-15595 and FQM-343,  FQM-5849, and FEDER funds.}

\author[Vico-Oton]{Albert Vico-Oton}
\address{Departament d'Enginyeria Inform\`atica i Matem\`atiques, Av. Pa\"isos Catalans, 26 Campus Sescelades, 43007 Tarragona, Espa\~na}
\email{albert.vico@urv.cat}

\date{\today}

\renewcommand\leq\leqslant
\renewcommand\le\leqslant
\renewcommand\geq\geqslant
\renewcommand\ge\geqslant

\begin{document}

\begin{abstract}
Patterns on numerical semigroups are multivariate linear polynomials, and they are said to be admissible if there exists a numerical semigroup such that evaluated at any nonincreasing sequence of elements of the semigroup gives integers belonging to the semigroup. In a first approach, only homogeneous patterns where analized. In this contribution we study conditions for an eventually non-homogeneous pattern to be admissible, and particularize this study to the case the independent term of the pattern is a multiple of the multiplicity of the semigroup. Moreover, for the so called strongly admissible patterns, the set of numerical semigroups admitting these patterns with fixed multiplicity $m$ form an $m$-variety, which allows us to represent this set in a tree and to describe minimal sets of generators of the semigroups in the variety with respect to the pattern. Furthermore, we characterize strongly admissible patterns having a finite associated tree.
\end{abstract}

\maketitle

\section{Introduction}


A numerical semigroup $\Lambda$ is a subset of the nonnegative integers ${\mathbb N}_0$ that contains $0$ and is closed under addition, and such that ${\mathbb N}_0\setminus\Lambda$ is finite. The number $\#({\mathbb N}_0\setminus\Lambda)$ is denoted the {\it genus} of the semigroup and the first nonzero nongap of $\Lambda$ is called its {\it multiplicity}. The largest integer not in $\Lambda$ is denoted $\mathrm F(\Lambda)$, and it is called the \emph{Frobenius number} of $\Lambda$.

Arf semigroups appear in many theoretical problems in algebraic geometry as well as in some applied areas such as coding theory \cite{BDF,RGGB,CaFaMu,BAO,BA:Arf,BA:Acute}. Arf semigroups are those semigroups such that for any elements $x_1,x_2,x_3$ in the semigroup with $x_1\geq x_2\geq x_3$, the integer $x_1+x_2-x_3$ also belongs to the semigroup. 

This definition inspired studying the so-called patterns on numerical semigroups \cite{BA-GS}. Patterns on numerical semigroups are multivariate polynomials such that evaluated at any decreasing sequence of elements of the semigroup give integers belonging to the semigroup.

For their simplicity, and for their inspiration in Arf semigroups, patterns were first defined to be linear and homogeneous. However, Arf semigroups are of maximal embedding dimension, and this larger class of numerical semigroups fulfills a nonhomogeneous pattern. Lately, other families of numerical semigroups that satisfy a nonhomogeneous pattern have appeared in very different areas of applied mathematics. This suggests the need for studying nonhomogeneous patterns on numerical semigroups.

In this contribution we give some results on nonhomogeneous linear patterns. We start by presenting some motivating examples. Then we focus on the problem of characterizing patterns that are admissible, that is, there is at least a nontrivial numerical semigroup admitting them. Next, we particularize this study to the case the independent term of the pattern is a multiple of the multiplicity. Moreover, if we fix the multiplicity, the set of numerical semigroups admitting a strong admissible pattern is closed under intersections and the adjoin of the Frobenius number. This motivates the definition of $m$-varieties and the concept of minimal generating system associated to an $m$-variety, which allow us to represent the elements in an $m$-variety in a tree rooted in $\{0\}\cup(m+\mathbb N_0)$. Finally, for a given multiplicity we characterize those strongly admissible patterns yielding a finite tree.

\section{Motivating examples}

We will present three different scenarios where nonhomogeneous patterns arise. 
The first one is related to commutative algebra, the second one is on algebraic geometry, and the third one is related to finite geometry.

\subsection{Semigroups with maximal embedding dimension}

\emph{Minimal generators} of a numerical semigroup are those elements that can not be obtained as the sum of any other two nonzero elements of the semigroup.  Equivalently, $x\in\Lambda$ is a minimal generator of the semigroup $\Lambda$ if and only if $\Lambda\setminus\{x\}$ is still a numerical semigroup. The number of minimal  generators (usually referred to as the \emph{embedding dimension}) is bounded by the multiplicity. Those numerical semigroups for which the number of minimal generators equals the multiplicity are said to be of {\it maximal embedding dimension} (MED). \
These semigroups also have other ``maximal'' properties as explained in  \cite{BDF} and \cite[Chapter 2]{R-GS:libro}.

Maximal embedding dimension numerical semigroups are characterized by the fact that for any two nonzero elements $x,y$ of the semigroup, one has that $x+y-m$ belongs to the semigroup where $m$ is its multiplicity.

This example, for a fixed $m$ (multiplicity) is related to the nonhomogeneous pattern $x_1+x_2-m$. 
From this, it easily follows that every Arf numerical semigroup has maximal embedding dimension.

\subsection{The Geil-Matsumoto bound}
An important problem of algebraic coding theory is upper bounding the maximum number of places of degree one of function fields.
The well known Hasse-Weil bound 
as well as Serre's improvement 
($q+1+g \lfloor 2\sqrt{q}\rfloor$)
use only the genus $g$ of the function field and the field size $q$.
Geil and Matsumoto give in \cite{GeilMatsumoto} a bound 
in terms of the field size and the Weierstrass semigroup $\Lambda$
of a rational place (that is, the set of pole orders of rational functions having only poles in that place). It is 
$\#(\Lambda\setminus\cup_{\lambda\in\Lambda\setminus\{0\}}(q\lambda_i+\Lambda))+1.$
It is a neat formula although it is not closed and it may be computationally hard to calculate. 
Lewittes' bound \cite{Lewittes} preceded the Geil-Matsumoto bound and it
only considers, apart from the field size, the multiplicity $m$ of the numerical semigroup. 
It is $1+qm$. It can be derived from the Geil-Matsumoto bound and so it is weaker. The obvious advantadge of Lewittes' bound with respect to the Geil-Matsumoto bound is that Lewittes' bound is very simple to compute. 
Furthermore, the results by Beelen and Ruano in \cite{BeelenRuano} allow bounding the number of rational places with nonzero coordinates by
$\#(\Lambda\setminus\cup_{\lambda\in\Lambda\setminus\{0\}}((q-1)\lambda_i+\Lambda))+1.$

It is proved in \cite{BA-VO} that the Geil-Matsumoto bound and the Lewittes' bound coincide if and only if
$qx-qm\in\Lambda$ for all $x\in\Lambda\setminus\{0\}$, where $m$ is the multiplicity of $\Lambda$. 
Similarly, it can be proved that Beelen-Ruano's bound on the number of 
rational places with nonzero coordinates equals $1+(q-1)m$ if and only if 
$(q-1)x-(q-1)m\in\Lambda$ for all $x\in\Lambda\setminus\{0\}$.

These examples, for a fixed $q$ (field size) and a fixed $m$ (multiplicity),
are related respectively to the nonhomogeneous patterns $qx_1-qm$
and $(q-1)x_1-(q-1)m$.

\begin{remark}
 Let $k$ be a positive integer, and let $\Lambda$ be a numerical semigroup with multiplicity $m$. 
%
Let $x$ and $y$ be two integers such that $kx-km, ky-km\in \Lambda$. Then $k(x+y)-km=(kx-km)+(ky-km)+km\in \Lambda$.
Consequently the following conditions are equivalent:
\begin{enumerate}[(a)]
 \item for every $x\in \Lambda\setminus\{0\}$, $kx-km\in \Lambda$,
 \item for every minimal generator $x$ of $\Lambda$, $kx-km\in \Lambda$.
\end{enumerate}
Set 
 \[\frac{\Lambda}k=\{ x\in \mathbb Z ~|~ kx\in \Lambda\},  
 \]
which is also a numerical semigroup (see \cite[Chapter 5]{R-GS:libro}). Then $kx-km\in \Lambda$ if and only if $x-m\in \Lambda/k$, or equivalently, $x\in m+\Lambda/k$. Hence the above conditions are also equivalent to
\begin{enumerate}[(c)]
\item  $\Lambda\setminus\{0\}\subseteq m+\Lambda/k$.
\end{enumerate}
In particular,  if $k\in \Lambda\setminus\{0\}$, then $\Lambda/k=\mathbb N_0$. Trivially $\Lambda\setminus\{0\}\subseteq m+\mathbb N_0$. 
\end{remark} 
%

\subsection{Combinatorial configurations}
A $(v,b,r,k)$-combinatorial configuration is an incidence structure with a set of $v$ points and a set of $b$ lines such that each line contains $k$ points, each point is contained in $r$ lines, and any two distinct lines are incident with at most one point or, equivalently, any two distinct points coincide in  at most one line. It is easy to prove that if a $(v,b,r,k)$-configuration exists, then necessarily $vr=bk$ and so, there exists an integer $d$ such that 
$(v,b,r,k)=(d\frac{k}{\gcd(r,k)},d\frac{r}{\gcd(r,k)},r,k)$.
For a fixed pair $r,k$, the set $D_{r,k}$ of all integers $d$ such that there exists a $(d\frac{k}{\gcd(r,k)},d\frac{r}{\gcd(r,k)},r,k)$-configuration is a numerical semigroup \cite{BA-S}.
It is proved in \cite{SB:CC_patterns} that $D_{r,k}$ satisfies the nonhomogeneous
pattern $x_1+x_2-n$ for any $n\in\{1,\dots,\gcd(r,k)\}$. See other related results in \cite{Grunbaum}.

\section{Nonhomogeneous patterns}
Here by a \emph{pattern} we will mean a linear polynomial 
with nonzero integer coefficients in $x_1,\dots,x_n$ and eventually a nonzero integer constant term. We will say that $n$ is the length of the pattern.
Homogeneous patterns were first introduced and studied in \cite{BA-GS}.
In that paper a semigroup was said to {\it admit} a pattern 
$p(x_1,\dots,x_n)$ if for every $n$ elements $s_1,\ldots,s_n$ in $\Lambda$ with $s_1 \geq s_2 \geq \cdots \geq s_n$ the integer $p(s_1,\dots,s_n)$ belonged to $\Lambda$. 
For a nonhomogeneous pattern with an integer nonzero constant term, it seems reasonable that the condition for a semigroup to admit it considers only nonzero elements of the semigroup. That is, we will say that a numerical semigroup admits a nonhomogeneous pattern 
$p(x_1,\dots,x_n)$ if for every $n$ {\it nonzero} elements $s_1,\ldots,s_n$ in $\Lambda$ with $s_1 \geq s_2 \geq \cdots \geq s_n$ the integer $p(s_1,\dots,s_n)$ belongs to $\Lambda$. 

We denote by $\mathcal{S}(p)$ the set of all numerical semigroups admitting $p$.

For the case of homogeneous patterns it was proved in \cite{BA-GS} that 
the following conditions are equivalent for a pattern $p=\sum_{i=1}^na_ix_i$:
\begin{itemize}
\item[(a)] $\mathcal{S}(p)\neq \emptyset$,
\item[(b)] $\mathbb N_0\in\mathcal{S}(p)$,
\item[(c)] $\sum_{i=1}^{j}a_i \geq 0$ for all $j\leq n$.
\end{itemize}

Here we will prove an equivalent result for nonhomogeneous patterns.
When dealing with nonhomogeneous 
patterns, the role that ${\mathbb N}_0$ played for homogeneous patterns will be played by an ordinary semigroup, that is, a semigroup of the form $\{0\}\cup (m+{\mathbb N}_0)$ for some integer $m$. This semigroup is represented by $\{0,m,\to\}$.

For a given pattern $p=\sum_{i=1}^n a_ix_i+a_0$ and for all $j\leq n$, considering the partial sums 
\begin{equation}
\label{eq:sigmas}
\sigma_{j}=\sum_{i=1}^{j}a_i
\end{equation}
will be useful for the formulation and proof of the following technical results.

\begin{lemma}\label{l:sums-neg}
 Let $p=\sum_{i=1}^n a_ix_i+a_0$ be a nonhomogeneous pattern such that $\sigma_j<0$ for some $j\in\{1,\ldots, n\}$. Then $\mathcal S(p)=\emptyset$.
\end{lemma}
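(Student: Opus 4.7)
My plan is to prove the contrapositive in a direct way: assuming $\Lambda\in\mathcal S(p)$, I will construct a nonincreasing sequence of nonzero elements of $\Lambda$ at which $p$ evaluates to a negative integer, contradicting $p(s_1,\dots,s_n)\in\Lambda\subseteq\mathbb N_0$.

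The key observation is that the partial sum $\sigma_j$ captures exactly the coefficient that multiplies a common value placed in the first $j$ positions. So I would fix $j$ with $\sigma_j<0$ and evaluate $p$ on a sequence of the shape $(N,\dots,N,m,\dots,m)$, where $N$ appears in the first $j$ positions, $m$ is the multiplicity of $\Lambda$, and $N$ is a large element of $\Lambda$ with $N\ge m$. Since $\Lambda$ is cofinite in $\mathbb N_0$, such $N$ exist and can be chosen arbitrarily large; the sequence is nonincreasing and consists of nonzero elements, so admissibility forces
\[
p(N,\dots,N,m,\dots,m)=N\,\sigma_j+m\,(\sigma_n-\sigma_j)+a_0\in\Lambda.
\]
Because $\sigma_j<0$, the right-hand side tends to $-\infty$ as $N\to\infty$, so for $N$ large enough it is negative, contradicting membership in $\Lambda$. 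Hence $\mathcal S(p)=\emptyset$.

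I do not expect any serious obstacle: the only mild subtlety is that the chosen sequence must consist of nonzero elements and be nonincreasing, but both are automatic once $N\ge m>0$. The argument uses nothing beyond the definition of a numerical semigroup (cofiniteness giving arbitrarily large elements) and the convention, just introduced in the section, that admissibility of a nonhomogeneous pattern is tested only on nonzero entries.
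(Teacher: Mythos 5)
Your proposal is correct and is essentially the paper's own argument: the paper also evaluates $p$ at the sequence with a large semigroup element $l$ in the first $j$ positions and the multiplicity $m$ in the rest, choosing $l$ explicitly larger than $\bigl(\sum_{k=j+1}^n a_k m + a_0\bigr)/(-\sigma_j)$ to force a negative value. The only difference is cosmetic: you argue "negative for $N$ large enough" where the paper writes out the explicit threshold.
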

\begin{proof}
For any numerical semigroup, let $m$ be its multiplicity 
and take any positive integer $l$ in the semigroup larger than $\frac{\sum_{k=j+1}^na_km+a_0}{-\sigma_j}$.
Then $\sum_{i=1}^{j}a_il+\sum_{i=j+1}^na_im+a_0<
\sum_{i=1}^{j}a_i\frac{\sum_{k=j+1}^na_km+a_0}{-\sum_{k=1}^{j}a_k}+\sum_{i=j+1}^na_im+a_0=0$, and so it does not belong to the semigroup. 
\end{proof}

\begin{lemma}\label{l:sigman-neg}
 Let $p=\sum_{i=1}^n a_ix_i+a_0$ be a nonhomogeneous pattern such that $\sigma_n\le 0$ and $a_0<0$. Then $\mathcal S(p)=\emptyset$.
\end{lemma}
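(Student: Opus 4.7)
The plan is to split on the sign of $\sigma_n$ and reduce the strict inequality case to the previous lemma, handling only the equality case by a direct evaluation.

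First I would dispatch the case $\sigma_n < 0$. In this case, $j = n$ witnesses the hypothesis of Lemma \ref{l:sums-neg}, so $\mathcal S(p) = \emptyset$ immediately. Thus the only remaining case is $\sigma_n = 0$, and this is where the hypothesis $a_0 < 0$ must be used (it was not needed in Lemma \ref{l:sums-neg}).

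For the case $\sigma_n = 0$, I would argue as follows. Let $\Lambda$ be an arbitrary numerical semigroup and pick any $s \in \Lambda \setminus \{0\}$. The constant evaluation $s_1 = s_2 = \cdots = s_n = s$ satisfies the required nonincreasing chain of nonzero elements, and
\[
p(s, s, \ldots, s) = \left(\sum_{i=1}^n a_i\right) s + a_0 = \sigma_n \, s + a_0 = a_0 < 0,
\]
so $p(s, \ldots, s) \notin \Lambda$. Hence $\Lambda$ does not admit $p$, and since $\Lambda$ was arbitrary, $\mathcal S(p) = \emptyset$.

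There is no real obstacle here: the previous lemma takes care of strict negativity of some partial sum, and the constant-evaluation trick collapses the linear part when $\sigma_n = 0$, leaving only the negative constant term. The mild subtlety is simply noticing that the previous lemma already covers $\sigma_n < 0$, so the new hypothesis $a_0 < 0$ is only called upon in the borderline case $\sigma_n = 0$, where it is precisely what makes $p(s,\ldots,s)$ land outside any semigroup.
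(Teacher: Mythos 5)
Your proof is correct and rests on the same key idea as the paper's: evaluating $p$ at a constant sequence of nonzero elements of the semigroup. The paper avoids your case split entirely by taking the constant value to be the multiplicity $m$ and observing that $p(m,\ldots,m)=\sigma_n m + a_0<0$ already follows from $\sigma_n\le 0$, $m>0$ and $a_0<0$, so there is no need to invoke Lemma~\ref{l:sums-neg} for the case $\sigma_n<0$.
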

\begin{proof}
Let $\Lambda$ be a numerical semigroup and $m$ be its multiplicity. Then $p(m,\ldots,m)= \sigma_n m + a_0  < 0$, so no numerical semigroup can admit $p$.
\end{proof}

Let $\Lambda$ be a numerical semigroup and let $x$ be a nonzero element of $\Lambda$. The Ap\'ery set of $x$ in $\Lambda$ is 
\[ \mathrm{Ap}(\Lambda,x)=\{ s\in \Lambda~|~ x-s\not\in \Lambda\}.\]
This set has exactly $x$ elements, one for each congruent class modulo $x$ (\cite[Chapter 1]{R-GS:libro}).

\begin{lemma}\label{l:sigman-uno-a0-menor-menos-uno}
 Let $p=\sum_{i=1}^n a_ix_i+a_0$ be a nonhomogeneous pattern such that $\sigma_n= 1$ and $a_0<0$. Then $\mathcal S(p)\subseteq \{\mathbb N_0\}$.
\end{lemma}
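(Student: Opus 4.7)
The plan is to argue by contradiction: suppose $\Lambda\in\mathcal S(p)$ with $\Lambda\neq\mathbb N_0$, and let $m\geq 2$ be its multiplicity. The whole proof will rest on a single clever evaluation, together with the iteration it allows.

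The key move is to evaluate $p$ on the constant tuple $(s,s,\ldots,s)$ for an arbitrary $s\in\Lambda\setminus\{0\}$, which produces
\[
p(s,\ldots,s)=\sigma_n s+a_0=s+a_0,
\]
since by hypothesis $\sigma_n=1$. Because $\Lambda$ admits $p$, this integer lies in $\Lambda$. Specializing first to $s=m$ gives $m+a_0\in\Lambda$; since $a_0<0$ the value $m+a_0$ is strictly less than $m$, and the only element of $\Lambda$ below the multiplicity is $0$, so I must have $a_0=-m$. This pins down the constant term in terms of the multiplicity and reduces the displayed identity to the very restrictive statement that $s-m\in\Lambda$ for every nonzero $s\in\Lambda$.

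From this I iterate: for $s\in\Lambda$, write $s=qm+r$ with $0\leq r<m$. As long as $s-km>0$, the element $s-km$ is a nonzero member of $\Lambda$, so the subtract-$m$ step applies again and yields $s-(k+1)m\in\Lambda$. Descending all the way down, if $r>0$ I reach $r\in\Lambda$ with $0<r<m$, contradicting the definition of multiplicity. Hence every element of $\Lambda$ is a multiple of $m$, i.e.\ $\Lambda\subseteq m\mathbb N_0$. But $m\geq 2$ then forces $\mathbb N_0\setminus\Lambda$ to be infinite, contradicting that $\Lambda$ is a numerical semigroup.

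The only real obstacle is spotting the constant-tuple evaluation; once that collapses the pattern to $s+a_0$, the determination $a_0=-m$ and the inductive descent are elementary. I do not expect any technical difficulty beyond checking that intermediate values in the descent remain strictly positive (which they do precisely until the final step that produces the forbidden residue $r$).
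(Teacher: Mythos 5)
Your proof is correct. The opening moves coincide exactly with the paper's: evaluate $p$ on the constant tuple $(m,\ldots,m)$ to get $m+a_0\in\Lambda$, observe that the only element of $\Lambda$ below the multiplicity is $0$, and conclude $a_0=-m$, so that admitting $p$ means $s-m\in\Lambda$ for every nonzero $s\in\Lambda$. Where you diverge is in deriving the final contradiction. The paper finishes in one line by picking a nonzero element $x$ of the Ap\'ery set $\mathrm{Ap}(\Lambda,m)$ --- which is nonempty because it has exactly $m>1$ elements --- so that $p(x,\ldots,x)=x-m\notin\Lambda$ by the very definition of that set. You instead iterate the subtraction of $m$ down each residue class, conclude that $\Lambda\subseteq m\mathbb N_0$, and then invoke cofiniteness to rule out $m\geq 2$. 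Both finishes are sound; the Ap\'ery set argument is shorter if one has that machinery available (the paper recalls it just before the lemma for exactly this purpose), while your descent is self-contained and avoids it at the cost of a careful check that the intermediate terms stay positive, which you do handle correctly.
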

\begin{proof}
Let $\Lambda$ be a numerical semigroup admitting $p$ and $m$ be its multiplicity. In this setting, $p(m,\ldots,m)= m + a_0$. Hence $p(m,\ldots,m)\in \Lambda$ forces $a_0=-m$. If $\Lambda\neq \mathbb N_0$, then $m\neq 1$. Take an element $x\in \mathrm{Ap}(\Lambda,m)\setminus\{0\}$ (this set is not empty since $m>1$). Then $p(x,\ldots,x)=x - m \notin \Lambda$, and so $\Lambda$ does not admit $p$, a contradiction. 
\end{proof}

\begin{lemma}
\label{l:sums}
If the nonhomogeneous pattern $p=\sum_{i=1}^n a_ix_i+a_0$, $a_0\neq 0$, is admitted at least by one semigroup other than $\mathbb N_0$, then $\sigma_j\geq 0$, for all  $j\leq n$, and either $a_0\geq 0$ or $\sigma_n>1$. 
\end{lemma}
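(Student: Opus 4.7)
The plan is to observe that this statement is a direct synthesis of the three preceding lemmas, and to prove it by contrapositive. Let $\Lambda\in\mathcal S(p)$ with $\Lambda\neq\mathbb N_0$.

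First I would invoke Lemma~\ref{l:sums-neg}: since $\mathcal S(p)$ is nonempty, no partial sum $\sigma_j$ can be strictly negative, giving the first conclusion $\sigma_j\geq 0$ for all $j\leq n$.

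Next I would handle the case $a_0<0$ separately to derive $\sigma_n>1$. Lemma~\ref{l:sigman-neg} applied to our nonempty $\mathcal S(p)$ rules out $\sigma_n\leq 0$, so combined with the previous step we obtain $\sigma_n\geq 1$. To exclude the boundary value $\sigma_n=1$, I would invoke Lemma~\ref{l:sigman-uno-a0-menor-menos-uno}: it forces $\mathcal S(p)\subseteq\{\mathbb N_0\}$ in that situation, contradicting the existence of $\Lambda\neq\mathbb N_0$ in $\mathcal S(p)$. Hence $\sigma_n\geq 2>1$.

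There is no real obstacle here, only careful bookkeeping: the three preceding lemmas were stated precisely so as to eliminate the three forbidden regimes ($\sigma_j<0$ for some $j$; $\sigma_n\leq 0$ with $a_0<0$; $\sigma_n=1$ with $a_0<0$ and $\Lambda\neq\mathbb N_0$). The only subtlety is noting that the hypothesis $a_0\neq 0$ is not actually used in the argument — it is part of the framing of the nonhomogeneous case — and that the dichotomy ``$a_0\geq 0$ or $\sigma_n>1$'' is exactly the negation of the union of the two forbidden regimes involving $a_0<0$.
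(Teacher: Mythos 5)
Your proposal is correct and follows exactly the paper's own argument: Lemma~\ref{l:sums-neg} gives $\sigma_j\geq 0$ for all $j$, and when $a_0<0$ Lemmas~\ref{l:sigman-neg} and \ref{l:sigman-uno-a0-menor-menos-uno} together exclude $\sigma_n\leq 1$, forcing $\sigma_n>1$. Your additional remarks (that $a_0\neq 0$ is not really used and that the dichotomy is just the negation of the forbidden regimes) are accurate bookkeeping observations consistent with the paper.
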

\begin{proof}
Suppose first that there exists $j\leq n$ such that $\sigma_j<0$. Lemma \ref{l:sums-neg} asserts that ${\mathcal S}(p)=\emptyset$, a contradiction.

Assume now that $a_0< 0$ and $\sigma_n\le 1$. Then Lemmas \ref{l:sigman-neg} and \ref{l:sigman-uno-a0-menor-menos-uno} state that there is no numerical semigroup other than $\mathbb N_0$ admitting $p$.
\end{proof}

Let $s_1,\dots,s_n$ be a nonincreasing sequence of nonzero elements of 
a semigroup $\Lambda$.
Define 
\begin{equation}
\label{eq:deltas}
\delta_j=\left\{\begin{array}{ll}
s_j-s_{j+1} &\mbox{if }1\leq j\leq n-1,\\
s_n-m &\mbox{if }j=n.
\end{array}\right.
\end{equation}
Then $\delta_i\geq 0$ for all $1\leq i\leq n$.
In particular, $s_i=\sum_{j=i}^{n-1}(s_{j}-s_{j+1})+(s_n-m)+m=
\sum_{j=i}^{n}\delta_{j}+m$. 
Then,
\begin{eqnarray}
p(s_1,\dots,s_n)&=&\sum_{i=1}^n a_is_i+a_0 =\sum_{i=1}^n a_i\left(\sum_{j=i}^{n}\delta_j+m\right)
+a_0\nonumber\\ &=&
\sum_{i=1}^n a_i\sum_{j=i}^{n}\delta_j+\sigma_nm+a_0 =\sum_{j=1}^{n}\sum_{i=1}^{j}a_i\delta_j+\sigma_nm+a_0\nonumber\\&=&
\sum_{j=1}^{n}\sigma_j\delta_j+\sigma_nm+a_0.\label{semi}
\end{eqnarray}


\begin{lemma}
\label{l:sufcondforordinary}
Suppose that the nonhomogeneous pattern $p=\sum_{i=1}^n a_ix_i+a_0$
satisfies
$\sigma_{j}=\sum_{i=1}^{j}a_i\geq 0$ for all $j\leq n$
and either $a_0\geq 0$ or $\sigma_n>1$.
 Let $m$ be any positive integer satisfying
$$\left\{\begin{array}{ll}
m\geq -\frac{a_0}{\sigma_{n}-1}&\mbox{ if }\sigma_n>1,
\\
m\geq 0 &\mbox{ if }\sigma_n=1 (\mbox{and so }a_0\geq 0),\\
m\leq a_0&\mbox{ if }\sigma_n=0.
\end{array}\right.
$$
Then the ordinary semigroup $\{0,m,\to\}$ admits $p$.
\end{lemma}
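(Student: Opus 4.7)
The plan is to pick any nonincreasing sequence $s_1\ge s_2\ge \cdots \ge s_n$ of nonzero elements of the ordinary semigroup $\{0,m,\to\}$ and show that $p(s_1,\ldots,s_n)\ge m$, which immediately gives $p(s_1,\ldots,s_n)\in \{0,m,\to\}$. Since every nonzero element of $\{0,m,\to\}$ is at least $m$, we have $s_i\ge m$ for every $i$, and consequently the increments $\delta_j$ defined in (\ref{eq:deltas}) are all nonnegative.

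The key tool is the decomposition in (\ref{semi}),
\begin{equation*}
p(s_1,\ldots,s_n)=\sum_{j=1}^{n}\sigma_j\delta_j+\sigma_n m+a_0,
\end{equation*}
which by the hypothesis $\sigma_j\ge 0$ and $\delta_j\ge 0$ yields $\sum_{j=1}^n\sigma_j\delta_j\ge 0$. It therefore suffices to show that, under the bound assumed on $m$, the ``tail'' $\sigma_n m+a_0$ is already at least $m$ in each of the three cases.

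I would then split into cases according to the value of $\sigma_n$. If $\sigma_n>1$, the assumption $m\ge -a_0/(\sigma_n-1)$ rearranges into $(\sigma_n-1)m\ge -a_0$, i.e.\ $\sigma_n m+a_0\ge m$. If $\sigma_n=1$ (so that $a_0\ge 0$), then $\sigma_n m+a_0=m+a_0\ge m$ trivially. If $\sigma_n=0$, the hypothesis $m\le a_0$ gives $\sigma_n m+a_0=a_0\ge m$. In all three cases, combining with $\sum_{j=1}^n\sigma_j\delta_j\ge 0$ produces $p(s_1,\ldots,s_n)\ge m$, which completes the proof.

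The argument is essentially a case analysis on $\sigma_n$, and no step looks hard: the identity (\ref{semi}) together with the nonnegativity of $\sigma_j$ and $\delta_j$ reduces the problem to the elementary inequality $\sigma_n m+a_0\ge m$, which is exactly what the three numerical conditions on $m$ were designed to guarantee. The only thing to be careful about is to note that $m$ being a positive integer, the hypothesis $m\le a_0$ implicitly forces $a_0\ge 1$, so the case $\sigma_n=0$ is consistent with the overall assumption ``$a_0\ge 0$ or $\sigma_n>1$''.
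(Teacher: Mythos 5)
Your proof is correct and follows essentially the same route as the paper's: the decomposition \eqref{semi} together with $\sigma_j\ge 0$ and $\delta_j\ge 0$ reduces everything to checking $\sigma_n m+a_0\ge m$ in the three cases, which is exactly what the paper does. The extra observation that $m\le a_0$ with $m$ positive forces $a_0\ge 1$ is a nice sanity check but not needed for the argument.
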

\begin{proof}
Suppose that
$s_1,\dots,s_n$ is a nonincreasing sequence of nonzero elements of
$\{0,m,\to\}$.
By \eqref{semi} and the hypothesis that $\sigma_j\geq 0$,
\[p(s_1,\dots,s_n)= \sum_{j=1}^{n}\sigma_j\delta_j+\sigma_nm+a_0 \geq \sigma_nm+a_0.\]

Now, if $\sigma_n=0$, then $p(s_1,\dots,s_n)\geq a_0$. So, in this case,
if $m$ satisfies the hypothesis $m\leq a_0$,
then $p(s_1,\dots,s_n)\in\{0,m,\to\}$.
If $\sigma_n=1$ (and so $a_0\geq 0$), then $p(s_1,\dots,s_n)\geq
m+a_0\geq m$, so, again,
$p(s_1,\dots,s_n)\in\{0,m,\to\}$.
Otherwise if $\sigma_n>1$,

\begin{eqnarray}
p(s_1,\dots,s_n)
&\geq&\sigma_nm+a_0 =  \left(\sigma_n-1\right)m+a_0+m\nonumber\\
&\geq &\left(\sigma_n-1\right)(-\frac{a_0}{\sigma_{n}-1})+a_0+m = m.\nonumber
\end{eqnarray}

So, $p(s_1,\dots,s_n)\in\{0,m,\to\}$.
\end{proof}

\begin{remark}
Let  $p=\sum_{i=1}^n a_ix_i+a_0$ be a nonhomogeneous pattern
such that 
$\sigma_{j}=\sum_{i=1}^{j}a_i\geq 0$ for all $j\leq n$, $a_0=-1$, and $\sigma_n=1$. Then Lemma~\ref{l:sigman-uno-a0-menor-menos-uno} asserts that $\mathcal S(p)\subseteq \{\mathbb N_0\}$. Also from \eqref{semi}, we obtain that for every 
nonincreasing sequence $s_1,\dots,s_n$ of positive integers, $p(s_1,\ldots, s_n)= \sum_{j=1}^{n}\sigma_j\delta_j+m-1$, which is a nonnegative integer. Hence $\mathcal S(p)= \{\mathbb N_0\}$.
\end{remark}

From the previous lemmas we obtain the following theorem.

\begin{theorem}
\label{t:adm}
The next conditions are equivalent for a pattern
$p=\sum_{i=1}^na_ix_i+a_0$, with $a_0\neq 0$, where
$\sigma_j=\sum_{i=1}^ja_i$:
\begin{enumerate}
\item[(a)] $\mathcal{S}(p)\not\subseteq \{\mathbb N_0\}$,
\item[(b)] either $a_0\geq 0$ or $\sigma_n>1$, and $\sigma_j\geq 0$ for all $j\leq n$.
\end{enumerate}
If any of these two conditions hold, then $\{0,m,\to\}\in\mathcal{S}(p)$ for all $m$ satisfying
\begin{equation}\label{admmult}\left\{\begin{array}{ll}
m\geq -\frac{a_0}{\sigma_{n}-1}&\mbox{ if }\sigma_n>1,
\\
m\geq 0 &\mbox{ if }\sigma_n=1 \mbox{ and so }a_0\geq 0,\\
m\leq a_0&\mbox{ if }\sigma_n=0.
\end{array}\right.
\end{equation}

\end{theorem}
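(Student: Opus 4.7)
The theorem is really a packaging of Lemmas~\ref{l:sums-neg} through \ref{l:sufcondforordinary} into a single equivalence together with a constructive ``moreover'' clause, so the plan is essentially bookkeeping. Both directions of the equivalence, and the explicit family of admissible ordinary semigroups, are already present in those preceding results; what remains is to check that the two conditions are phrased so as to match each other exactly.

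For $(a)\Rightarrow(b)$ I would simply invoke Lemma~\ref{l:sums}: any semigroup in $\mathcal{S}(p)\setminus\{\mathbb N_0\}$ is the witness that lemma requires, and its conclusion is verbatim condition (b). For $(b)\Rightarrow(a)$, together with the moreover clause simultaneously, Lemma~\ref{l:sufcondforordinary} takes (b) as its hypothesis and produces $\{0,m,\to\}\in\mathcal{S}(p)$ for every positive integer $m$ obeying \eqref{admmult}; this is the moreover. To extract (a) I need only exhibit some such $m\geq 2$, since then $\{0,m,\to\}\neq \mathbb N_0$. This is a quick case split on \eqref{admmult}: when $\sigma_n>1$ any $m$ larger than both $1$ and $-a_0/(\sigma_n-1)$ works; when $\sigma_n=1$ (forcing $a_0\geq 0$) one may take $m=2$; when $\sigma_n=0$ one takes $m=a_0$, which is positive because $a_0\neq 0$ and $a_0\geq 0$ by (b).

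No substantive obstacle arises: the technical content -- the telescoping identity \eqref{semi} used to manufacture ordinary-semigroup witnesses, the two distinct multiplicity-indexed contradictions (evaluating $p$ at $(l,\dots,l,m,\dots,m)$ for large $l$ versus at an element of $\mathrm{Ap}(\Lambda,m)$), and the resulting trichotomy on the sign and value of $\sigma_n$ -- has already been carried out in the preceding lemmas. The only step requiring any care is the tiny case analysis above, whose sole purpose is to upgrade ``$\{0,m,\to\}$ admits $p$ for some $m$'' to ``for some $m\geq 2$''. The proof is therefore short, just a careful assembly of what is proved.
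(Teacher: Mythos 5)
Your assembly is the same one the paper intends (the paper gives no separate argument beyond ``from the previous lemmas''), and two of your three pieces are sound: $(a)\Rightarrow(b)$ is exactly Lemma~\ref{l:sums}, and the moreover clause is exactly Lemma~\ref{l:sufcondforordinary}. The gap is in the step you yourself flag as the only one ``requiring any care'': upgrading the moreover clause to condition (a) by exhibiting an $m\geq 2$ satisfying \eqref{admmult}. In the branch $\sigma_n=0$ you take $m=a_0$ and justify it by noting $a_0$ is \emph{positive}; but positive only gives $m\geq 1$, whereas you need $m\geq 2$. When $a_0=1$ the constraint $1\le m\le a_0$ forces $m=1$, i.e.\ $\{0,m,\to\}=\mathbb N_0$, and your argument produces no witness for (a).

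This is not a repairable slip: the implication $(b)\Rightarrow(a)$ is actually false in that corner case. Take $p=x_1-x_2+1$, so $\sigma_1=1\ge 0$, $\sigma_2=0\ge 0$ and $a_0=1\ge 0$, hence (b) holds; yet for any numerical semigroup of multiplicity $m>1$ one has $p(m,m)=1\notin\Lambda$, so $\mathcal S(p)=\{\mathbb N_0\}$ and (a) fails. (The remark preceding the theorem disposes of the dual degenerate case $\sigma_n=1$, $a_0=-1$, but the case $\sigma_n=0$, $a_0=1$ is not excluded by the hypotheses.) So your proof is correct except on the set $\{\sigma_n=0,\ a_0=1\}$, where the theorem as stated cannot be proved; an honest write-up must either exclude that case from the statement or restrict the claim there to the moreover clause (which does hold, trivially, with $m=1$).
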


The patterns satisfying the conditions in Theorem~\ref{t:adm} are called {\it admissible} patterns.
Given one such pattern $p$, the multiplicities $m$ satisfying \eqref{admmult}
are called $p$-{\it admissible} multiplicities. 

\section{Patterns involving the multiplicity}
Notice that both the pattern associated to the Geil-Matsumoto bound ($qx_1-qm$) and the pattern associated to the maximal embedding dimension semigroups ($x_1+x_2-m$) involve in their constant parameter the multiplicity of the semigroup. 
The patterns whose constant term is an integer multiple of the multiplicity can be seen as an intermediate class between homogeneous and nonhomogeneous patterns.

Here we are interested in the semigroups not only admitting the pattern but also having the desired multiplicity. Let ${\mathcal S}_m(p)$ be the set of numerical semigroups with multiplicity $m$ admitting the pattern $p$.
The first result we would like to analyze is whether, parallelizing the previous results, ${\mathcal S}_m(p)\neq\emptyset$ is equivalent to $\{0,m,\to\}\in{\mathcal S}_m(p)$.
But we can see that this is not true in general. Indeed, the pattern related to the Geil-Matsumoto bound (that is, $qx_1-qm$) gives a counterexample. Just take $q=2$ and $m=3$.
In this case $\{0,3,\to\}\not\in{\mathcal S}_3(2x_1-6)$ because evaluating the pattern $2x_1-6$ at $s_1=4$ gives $2\not\in\{0,3,\to\}$. However, 
${\mathcal S}_3(2x_1-6)\neq\emptyset$ because for instance the semigroup $\{0,3,6,\to\}$ belongs to ${\mathcal S}_3(2x_1-6)$.
Nevertheless, we show that for some particular cases, we can find results similar to the ones in the previous sections.

\begin{theorem}
\label{6:theorem:adpat}
Let $p = \sum^n_{i=1} a_ix_i +k m$ be  a nonhomogeneous pattern, with $m>1$ and $k$ a nonzero integer. Set $\sigma_j=\sum^{j}_{i=1}a_i$. Assume that either $k=-1$ or that there exists $j\in \{1,\ldots,n\}$ such that $\sigma_j=1$. The following conditions are equivalent:
\begin{enumerate}[(a)]
\item there exists a numerical semigroup of multiplicity $m$ that admits $p$,
\item $\{0,m,\to\}$ admits $p$,
\item $\left \{ \begin{array}{l} \sigma_j \geq 0 \mbox{ for all } j \leq n,\\ \sigma_n +k\geq 1. \end{array}\right .$
\end{enumerate}
\end{theorem}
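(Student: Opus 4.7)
The plan is to establish the cycle (c)$\Rightarrow$(b)$\Rightarrow$(a)$\Rightarrow$(c). The implication (b)$\Rightarrow$(a) is immediate since $\{0,m,\to\}$ is itself a numerical semigroup of multiplicity $m$. For (c)$\Rightarrow$(b) I plan to reuse identity~\eqref{semi} applied to $\{0,m,\to\}$: every nonincreasing sequence $s_1,\dots,s_n$ of nonzero elements satisfies $s_i\geq m$, so every $\delta_j\geq 0$, and combined with $\sigma_j\geq 0$ for all $j$ this gives
\[p(s_1,\dots,s_n)=\sum_{j=1}^{n}\sigma_j\delta_j+(\sigma_n+k)m\geq(\sigma_n+k)m\geq m,\]
which lies in $\{0,m,\to\}$.

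The substance of the argument is (a)$\Rightarrow$(c). Fix a numerical semigroup $\Lambda$ of multiplicity $m>1$ admitting $p$. The bound $\sigma_j\geq 0$ for all $j$ is immediate from Lemma~\ref{l:sums-neg} (otherwise $\mathcal S(p)=\emptyset$), so only $\sigma_n+k\geq 1$ remains, and this is where the hypothesis ``$k=-1$ or some $\sigma_j=1$'' enters. If $k=-1$, then $a_0=-m<0$; the possibility $\sigma_n\leq 0$ is excluded by Lemma~\ref{l:sigman-neg}, and $\sigma_n=1$ is excluded by Lemma~\ref{l:sigman-uno-a0-menor-menos-uno} (which would force $\Lambda=\mathbb N_0$, incompatible with $m>1$), so $\sigma_n\geq 2$.

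If instead some $\sigma_j=1$, I would argue by contradiction assuming $\sigma_n+k\leq 0$. The subcase $\sigma_n+k<0$ is disposed of by the constant evaluation $p(m,\dots,m)=(\sigma_n+k)m<0$, which cannot belong to $\Lambda$. The delicate subcase is $\sigma_n+k=0$: here I would pick a nonzero $x\in\mathrm{Ap}(\Lambda,m)$ (which exists because $m>1$) and evaluate $p$ on the nonincreasing sequence consisting of $j$ copies of $x$ followed by $n-j$ copies of $m$; using $\sigma_j=1$ and $\sigma_n+k=0$, the expression collapses to $x-m$, which is not in $\Lambda$ by definition of the Ap\'ery set, contradicting admissibility.

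The main obstacle is precisely this $\sigma_n+k=0$ subcase, because the blunt test $p(m,\dots,m)=0$ already lies in $\Lambda$ and gives no information. The trick is to exploit the index $j$ with $\sigma_j=1$ to funnel the Ap\'ery-set witness through exactly the first $j$ slots, so that the offending residue $x-m$ is forced into the output. This also clarifies why the hypothesis cannot be dropped: in the paper's counterexample $p=2x_1-6$ with $m=3$ one has $k=-2$ and the only partial sum $\sigma_1=2$ never equals $1$, so neither branch of the hypothesis applies and indeed (a)$\Rightarrow$(c) fails.
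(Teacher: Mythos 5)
Your proof is correct and follows essentially the same route as the paper's: the key step, producing a forbidden value by evaluating $p$ at $j$ copies of an Ap\'ery-set element followed by $n-j$ copies of $m$ (exploiting $\sigma_j=1$), is exactly the paper's device, and the $k=-1$ branch is handled by the same lemmas. The only cosmetic differences are that you prove (c)$\Rightarrow$(b) by a direct appeal to identity~\eqref{semi} rather than routing through the case analysis of Theorem~\ref{t:adm}, and you split $\sigma_n+k\le 0$ into the subcases $\sigma_n+k<0$ (constant evaluation at $m$) and $\sigma_n+k=0$ (Ap\'ery set of $m$), whereas the paper treats both at once using a nonzero element of $\mathrm{Ap}(\Lambda,(t+1)m)$.
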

\begin{proof}
\emph{(a) implies (c).} Let $\Lambda$ be a numerical semigroup with multiplicity $m$ admitting $p$ (as $m>1$, $\Lambda\neq\mathbb N_0$). In light of Theorem \ref{t:adm}, $\sigma_j\ge 0$ for all $j\le n$. So, it remains to prove that $\sigma_n+k\ge 1$. We distinguish two cases: $k>0$ and $k<0$. For the first case, the assertion follows trivially since $\sigma_n\ge 0$ and $k\ge 1$. 

If $k<0$, by Condition (b) in Theorem \ref{t:adm}, we get $\sigma_n>1$. For $k=-1$ we have $\sigma_n+k\ge 1$, and for $k<-1$, by hypothesis, there must be $j\in\{1,\ldots,n\}$ such that $\sigma_j=1$. Let $\overline{\sigma}_j=\sum_{i=j+1}^n a_i$. Assume to the contrary that $\sigma_n+k\le 0$. Then $\sigma_j+\overline{\sigma_j}+k=-t$ for some nonnegative integer $t$, and $\overline{\sigma}_j+k=-t-1=-(t+1)<0$. Let $x$ be a nonzero element of $\mathrm{Ap}(\Lambda,(t+1)m)$. 
Set $s_1=x,\ldots, s_j=x, s_{j+1}=m,\ldots, s_n=m$. Then $p(s_1,\ldots,s_n)= \sigma_j x+\overline{\sigma}_j m + km = x-(t+1)m\not \in \Lambda$, a contradiction.

\emph{(c) implies (b).} By hypothesis $\sigma_n\ge 0$. We distinguish three cases.
\begin{itemize}
\item If $\sigma_n>1$, then $\sigma_n-1>0$, and consequently $m\ge \frac{-k m}{\sigma_n-1}$ if and only if $\sigma_n-1\ge -k$, that is, $\sigma_n+k\ge 1$. By Theorem \ref{t:adm}, $\{0,m,\to\}\in \mathcal S(p)$.

\item If $\sigma_n=1$, then $\sigma_n+k\ge 1$ forces $km\ge 0$. As $m\ge 0$, Theorem \ref{t:adm} ensures that $\{0,m,\to\}$ admits $p$.
 
\item For $\sigma_n=0$, the condition $\sigma_n+k\ge 1$, implies that $k\ge 1$. Hence $km\ge 0$, and by Theorem \ref{t:adm}  we obtain that $\{0,m,\to\}\in \mathcal S(p)$, because trivially $m\le km$. 
\end{itemize}

\emph{(b) implies (a)}. Trivial.
\end{proof}

\mut{
Following the notation in \cite{BA-GS} we can define a pattern $p=\sum_{i=1}^na_ix_i+a_0$ to be {\it premonic} 
if $\sum_{i=1}^{j}a_i=1$ for some $j\leq n$. In particular, monic patterns are premonic. 
The patterns for maximal embedding dimension numerical semigroups as well as the patterns associated to combinatorial configurations are premonic, while the pattern associated to the Geil-Matsumoto bound is not premonic unless $q=1$.
}

\section{Nonhomogeneous Frobenius varieties}

A \emph{Frobenius variety} is a nonempty family $\mathcal V$ of numerical semigroups such that
\begin{enumerate}
\item if $\Lambda_1,\Lambda_2\in \mathcal V$, then $\Lambda_1\cap \Lambda_2\in \mathcal V$,

\item if $\Lambda\in \mathcal V$, $\Lambda\neq \mathbb N_0$, then $\Lambda\cup \{\mathrm F(\Lambda)\}\in \mathcal V$.
\end{enumerate}

The families of Arf, saturated, system proportionally modular numerical semigroups, and those admitting an homogeneous admissible pattern are Frobenius varieties (see \cite{RGGB}, \cite{saturated}, \cite{spm} and \cite{BA-GS}, respectively). The class of system proportionally modular numerical semigroups coincides with the set of numerical semigroups having a Toms decomposition \cite{ns-toms}, and thus every numerical semigroup in this family can be realized as the positive cone of the $K_0$-group of a $C^*$-algebra \cite{toms}. 

Frobenius varieties were precisely introduced by Rosales in \cite{variedades-frobenius} because he observed that there was a common factor in \cite{RGGB,saturated,spm,BA-GS}: some of the proofs were based on the fact that these families were closed under intersections and the adjoin of the Frobenius number. Also due to this fact, it was possible to define minimal generating systems with respect to any of these families that are, in general, smaller than classical minimal generating systems (which are obtained by simply considering the Frobenius variety of all numerical semigroups). As in the classical sense, a minimal generator of a numerical semigroup $\Lambda$ in a Frobenius variety is an element $x\in \Lambda$ such that $\Lambda\setminus\{x\}$ is also in the Frobenius variety. 
This allows to arrange the semigroups in a Frobenius variety in a tree rooted in $\mathbb N_0$, and consequently theoretically construct all numerical semigroups in the variety up to a given genus.

We now modify slightly the definition of Frobenius variety mainly inspired in \cite{MED}. The proofs are similar to the classical case, indeed we will follow the sequence of arguments given in \cite[Sections 6.4 and 6.5]{R-GS:libro}.

Let $m$ be a positive integer, and let $\mathcal V$ be a set of numerical semigroups with multiplicity $m$. We say that $\mathcal V$ is a
\emph{nonhomogeneous Frobenius variety of multiplicity $m$} or \emph{$m$-variety} for short if 

\begin{enumerate}[({V}1)]
\item for every $\Lambda_1,\Lambda_2\in \mathcal V$, $\Lambda_1\cap \Lambda_2$ is also in $\mathcal V$,
\item for every $\Lambda\in \mathcal V$, $\Lambda\neq \{0,m,\to\}$, $\Lambda\cup \mathrm F(\Lambda)\in \mathcal V$.
\end{enumerate}

Observe that according to this second condition, the semigroup $\{0,m,\to\}$ is in  $\mathcal V$. Also, in light of \cite[Proposition 3 and Lemma 10]{MED}, the set of maximal embedding dimension numerical semigoups with multiplicity $m$ is an $m$-variety.

A submonoid $M$ of $\mathbb N_0$ is a \emph{$\mathcal V$-monoid} if $M$ can be expressed as an intersection of elements of $\mathcal V$. For a set of integers $A$ larger than or equal to $m$, the \emph{$\mathcal V$-monoid generated} by $A$, denoted by $\mathcal V(A)$, is the intersection of all elements in $\mathcal V$ containing $A$. The condition $A\subseteq \{0,m,\to\}$ implies that $\mathcal V(A)$ is not empty, and thus it is indeed a submonoid of $\mathbb N_0$. For a $\mathcal V$-monoid $M$, we say that $A\subseteq M$ is a \emph{$\mathcal V$-generating system}, or that $A$ \emph{$\mathcal V$-generates} $M$, if $\mathcal V(A)=M$. In addition $A$ is a \emph{minimal} $\mathcal V$-generating system if no proper subset of $A$ $\mathcal V$-generates $M$.  Notice that $m$ is never in a minimal $\mathcal V$-generating system of any $\mathcal V$-monoid.

From now on, given a subset $A$ of ${\mathbb N}_0$, we will use the notation
$$\langle A\rangle=\left\{\sum_{i=1}^nk_ia_i:n\in{\mathbb N_0}, k_i\in{\mathbb N_0}, a_i\in A\mbox{ for all }i\in\{1,\dots,n\}\right\}.$$

\begin{remark}\label{propiedades-v-monoides}
The following facts are easy consequences of the definitions.
\begin{enumerate}[1)]
\item The intersection of $\mathcal V$-monoids is a $\mathcal V$-monoid.
\item Let $A$ and  $B$ be subsets of $\{0,m,\to\}$. If $A\subseteq B$, then $\mathcal V(A)\subseteq \mathcal V(B)$.
\item For every set $A$ of integers larger than or equal to $m$, $\mathcal V(\langle A\rangle)=\mathcal V(A)$. 
\item If $M$ is a $\mathcal V$-monoid, then $\mathcal V(M)=M$.
\end{enumerate}
These two last assertions imply that every $\mathcal V$-monoid admits a $\mathcal V$-generating system with finitely many elements.
\end{remark}

From this remark, the following characterization of minimal $\mathcal V$-generating systems can be proved easily (its proof is the same as \cite[Lemma 7.24]{R-GS:libro}).

\begin{lemma}\label{carac-v-sistemas-minimales}
Let $A\subseteq \{0,m,\to\}$ and $M=\mathcal V(A)$. The set $A$ is a minimal $\mathcal V$-generating system of $M$ if and only if $a\not\in \mathcal V(A\setminus\{a\})$ for all $a\in A$.
\end{lemma}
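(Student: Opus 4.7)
The plan is to argue both implications by contradiction, using only the elementary closure properties of $\mathcal V$-monoids gathered in Remark~\ref{propiedades-v-monoides}, in particular that $B\subseteq C\subseteq \{0,m,\to\}$ implies $\mathcal V(B)\subseteq \mathcal V(C)$, and that $\mathcal V(\mathcal V(B))=\mathcal V(B)$.

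For the implication ($\Rightarrow$), I would suppose that $A$ is a minimal $\mathcal V$-generating system of $M$ and that nevertheless there exists some $a\in A$ with $a\in\mathcal V(A\setminus\{a\})$. Then $A\subseteq \mathcal V(A\setminus\{a\})$, and since $\mathcal V(A\setminus\{a\})$ is itself a $\mathcal V$-monoid containing $A$, by the minimality of $\mathcal V(A)$ we obtain $M=\mathcal V(A)\subseteq \mathcal V(A\setminus\{a\})$. The reverse inclusion is immediate from $A\setminus\{a\}\subseteq A$, so $\mathcal V(A\setminus\{a\})=M$, contradicting that no proper subset of $A$ $\mathcal V$-generates $M$.

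For the implication ($\Leftarrow$), I would assume that $a\notin\mathcal V(A\setminus\{a\})$ for every $a\in A$ and that, on the contrary, $A$ is not a minimal $\mathcal V$-generating system of $M$. Then there exists $B\subsetneq A$ with $\mathcal V(B)=M$. Choosing $a\in A\setminus B$, one has $B\subseteq A\setminus\{a\}$, and therefore $a\in M=\mathcal V(B)\subseteq \mathcal V(A\setminus\{a\})$, contradicting the hypothesis.

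I do not anticipate any real obstacle: the argument is purely formal, mirroring the classical proof of \cite[Lemma 7.24]{R-GS:libro}. The only point worth being careful about is that the notion of minimality refers to \emph{proper} subsets, so in the second implication one must pick $a$ from $A\setminus B$ and not from $B$ itself; everything else is a direct unwinding of the definition of $\mathcal V(\cdot)$.
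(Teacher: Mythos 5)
Your proof is correct and follows exactly the standard argument; the paper itself does not spell out a proof but simply notes that it is the same as \cite[Lemma 7.24]{R-GS:libro}, which is precisely the unwinding of the definition of $\mathcal V(\cdot)$ via monotonicity and idempotency that you give. Both directions are handled properly, including the point of choosing $a\in A\setminus B$ in the converse.
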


Next lemma is the key result to show that minimal $\mathcal V$-generating systems are unique. Its proof goes as that of \cite[Lemma 7.25]{R-GS:libro} with a slight modification.

\begin{lemma}\label{llave-para-unicidad-v-sistemas-minimales}
Let $A\subseteq \{0,m,\to\}$. If $x\in \mathcal V(A)$, then $x\in \mathcal V(\{a\in A~|~ a\le x\})$.
\end{lemma}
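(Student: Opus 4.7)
The plan is to prove the contrapositive-style reformulation: every $\Lambda\in \mathcal V$ containing $B:=\{a\in A : a\le x\}$ must contain $x$. Fix such a $\Lambda$. The case $x=0$ is immediate because $\mathcal V(B)\supseteq \{0,m,\to\}\ni 0$, so assume $x\ge m$. Define
\[ T = \Lambda \cup \{y\in \mathbb N_0 : y > x\}. \]
A quick check shows $T$ is a numerical semigroup of multiplicity $m$: it contains $0$ and $m\in \Lambda$, its complement is contained in $\{1,\dots,x\}$ and hence finite, and if $a,b\in T$ then either both lie in $\Lambda$ (so $a+b\in \Lambda\subseteq T$) or one of them exceeds $x$ and hence $a+b>x$ lies in $T$.

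The central step is showing that $T\in \mathcal V$. The idea is to reach $T$ from $\Lambda$ by iteratively applying condition (V2) to peel off the gaps of $\Lambda$ that lie above $x$. Set $\Lambda_0=\Lambda$. At stage $i$, if $\Lambda_i$ still has a gap above $x$, then the largest gap, namely $\mathrm F(\Lambda_i)$, satisfies $\mathrm F(\Lambda_i)>x\ge m$, so in particular $\Lambda_i\neq \{0,m,\to\}$. By (V2), $\Lambda_{i+1}:=\Lambda_i\cup\{\mathrm F(\Lambda_i)\}\in \mathcal V$. Since $\Lambda$ has only finitely many gaps, this process terminates at a semigroup whose gaps are all $\le x$, which is precisely $T$. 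Hence $T\in \mathcal V$.

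Finally, $B\subseteq \Lambda\subseteq T$ and $A\setminus B\subseteq \{x+1,x+2,\dots\}\subseteq T$, so $A\subseteq T$. By definition of $\mathcal V(A)$ this forces $\mathcal V(A)\subseteq T$, whence $x\in T$. Since $T$ and $\Lambda$ agree on $\{0,1,\dots,x\}$, we conclude $x\in \Lambda$. As $\Lambda$ was an arbitrary element of $\mathcal V$ containing $B$, we obtain $x\in \mathcal V(B)$, as desired.

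The main obstacle is verifying $T\in \mathcal V$; this is where the slight modification relative to \cite[Lemma 7.25]{R-GS:libro} enters. One must be careful that the iterative Frobenius-adjunction never reaches $\{0,m,\to\}$ before all gaps above $x$ have been removed, which is guaranteed by $x\ge m$. Everything else is bookkeeping: routine checks that $T$ is a numerical semigroup of multiplicity $m$, that $A\subseteq T$, and that $T$ coincides with $\Lambda$ at and below $x$.
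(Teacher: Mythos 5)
Your proof is correct and follows essentially the same route as the paper's: both hinge on the construction $T=\Lambda\cup\{0,x+1,\to\}$ obtained by iterating condition (V2), with $x\ge m$ guaranteeing the iteration never needs to leave $\{0,m,\to\}$. The only difference is presentational (you argue directly over all $\Lambda\supseteq B$, the paper argues by contradiction with a single witness), so no further comment is needed.
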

\begin{proof}
Assume to the contrary that $x\not\in \mathcal V(\{a\in A~|~ a\le x\})$. Notice that this forces $x\not\in \{0,m,\to\}$ and $x\not\in A$. From the definition of $\mathcal V(\{a\in A~|~ a\le x\})$, it follows that there exists $\Lambda\in \mathcal V$ containing $\{a\in A~|~ a\le x\}$ such that $x\not \in \Lambda$. As $m<x\le \mathrm F(\Lambda)$, by applying as many times as needed Condition V2, the set $\Lambda\cup\{0,x+1,\to\}$ is in $\mathcal V$. Clearly $A\subseteq \Lambda\cup\{0,x+1,\to\}$ and $x\not\in \Lambda\cup\{0,x+1,\to\}$, which implies $x\not\in \mathcal V(A)$, a contradiction.
\end{proof}

\begin{theorem}\label{unicidad-v-sistema-minimal}
Let $m$ be a positive integer and let $\mathcal V$ be an $m$-variety. Every $M$ monoid has a unique minimal $\mathcal V$-system of generators with finitely many elements.
\end{theorem}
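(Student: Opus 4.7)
My plan is to split the statement into an existence (with finiteness) part and a uniqueness part, both of which should follow closely the template of \cite[Theorem 7.26]{R-GS:libro}, using Lemmas~\ref{carac-v-sistemas-minimales} and~\ref{llave-para-unicidad-v-sistemas-minimales} as the only nontrivial ingredients.

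For existence, I would start from a finite $\mathcal V$-generating system $C$ of $M$, which is guaranteed by Remark~\ref{propiedades-v-monoides}. I would then prune $C$ by iteratively discarding any element $c\in C$ with $c\in\mathcal V(C\setminus\{c\})$. The key observation to justify each removal is that if $c\in\mathcal V(C\setminus\{c\})$, then every $\Lambda\in\mathcal V$ containing $C\setminus\{c\}$ automatically contains $c$, whence $\mathcal V(C\setminus\{c\})=\mathcal V(C)=M$. Since $C$ is finite, the process terminates after finitely many steps, yielding a finite $A\subseteq C$ satisfying the hypothesis of Lemma~\ref{carac-v-sistemas-minimales}, hence a finite minimal $\mathcal V$-generating system of $M$.

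For uniqueness, let $A$ and $B$ be two minimal $\mathcal V$-generating systems of $M$, and take $a\in A$. Since $a\in M=\mathcal V(B)$, Lemma~\ref{llave-para-unicidad-v-sistemas-minimales} gives $a\in\mathcal V(\{b\in B\mid b\le a\})$. Suppose toward a contradiction that $a\notin B$; then every $b\in B$ with $b\le a$ actually satisfies $b<a$. For each such $b$ we have $b\in M=\mathcal V(A)$, and Lemma~\ref{llave-para-unicidad-v-sistemas-minimales} together with monotonicity (Remark~\ref{propiedades-v-monoides}(2)) yields
\[
 b\in \mathcal V(\{a'\in A\mid a'\le b\})\subseteq \mathcal V(\{a'\in A\mid a'<a\})\subseteq \mathcal V(A\setminus\{a\}).
\]
Hence $\{b\in B\mid b\le a\}\subseteq \mathcal V(A\setminus\{a\})$, so by monotonicity and Remark~\ref{propiedades-v-monoides}(4) applied to the $\mathcal V$-monoid $\mathcal V(A\setminus\{a\})$,
\[
 a\in \mathcal V(\{b\in B\mid b\le a\})\subseteq \mathcal V(\mathcal V(A\setminus\{a\}))=\mathcal V(A\setminus\{a\}),
\]
which contradicts the minimality of $A$ via Lemma~\ref{carac-v-sistemas-minimales}. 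Therefore $a\in B$, proving $A\subseteq B$; the reverse inclusion follows by symmetry.

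The argument is largely bookkeeping, and I expect no serious obstacle beyond keeping track of the indices; the only delicate point is the double application of Lemma~\ref{llave-para-unicidad-v-sistemas-minimales} (once to $a$ via $B$, and once to each relevant $b$ via $A$) together with the closure identity $\mathcal V(\mathcal V(X))=\mathcal V(X)$ from Remark~\ref{propiedades-v-monoides}, which is what converts the chain of inclusions into the desired contradiction.
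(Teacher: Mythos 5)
Your proposal is correct and is essentially the argument the paper intends: the paper simply defers to \cite[Theorem 7.26]{R-GS:libro}, whose proof is exactly your two-step scheme (prune a finite $\mathcal V$-generating system from Remark~\ref{propiedades-v-monoides} for existence, then use the double application of Lemma~\ref{llave-para-unicidad-v-sistemas-minimales} together with Lemma~\ref{carac-v-sistemas-minimales} and the closure identity $\mathcal V(\mathcal V(X))=\mathcal V(X)$ for uniqueness). You have in effect written out in full the proof the authors only cite.
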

\begin{proof}
The proof that minimal $\mathcal V$-generating systems are unique is the same as that of \cite[Theorem 7.26]{R-GS:libro}, where Remark \ref{propiedades-v-monoides} plays the role of \cite[Lemma 7.22]{R-GS:libro}, and Lemmas \ref{carac-v-sistemas-minimales} and \ref{llave-para-unicidad-v-sistemas-minimales} are the analogues to \cite[Lemmas 7.24 and 7.25]{R-GS:libro}, respectively. The finiteness condition is a consequence of the last paragraph of Remark \ref{propiedades-v-monoides}.
\end{proof}

An element in the unique minimal $\mathcal V$-generating system of a $\mathcal V$-monoid will be called a \emph{minimal $\mathcal V$-generator}. As a consequence of Theorem \ref{unicidad-v-sistema-minimal}, these elements can now be characterized as in a Frobenius variety. Actually, the proof of this description is exactly the same as \cite[Proposition 7.28]{R-GS:libro}. Notice that, as we already mentioned above, $m$ is not a minimal $\mathcal V$-generator for any monoid in an $m$-variety.

\begin{corollary}\label{caracterizacion-v-generador-minimal}
Let $M$ be a $\mathcal V$-monoid and let $x\in M$. The set $M\setminus\{x\}$ is a $\mathcal V$-monoid if and only if $x$ is a minimal $\mathcal V$-generator.
\end{corollary}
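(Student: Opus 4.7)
The plan is to prove the biconditional directly, relying on the uniqueness of minimal $\mathcal V$-generating systems from Theorem~\ref{unicidad-v-sistema-minimal} together with Lemmas~\ref{carac-v-sistemas-minimales} and~\ref{llave-para-unicidad-v-sistemas-minimales}. The rough strategy is: for one implication use uniqueness, and for the other produce explicitly a semigroup in $\mathcal V$ that separates $x$ from $M\setminus\{x\}$.

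For the easy direction, assume $M\setminus\{x\}$ is a $\mathcal V$-monoid and let $A$ be the (unique, finite) minimal $\mathcal V$-generating system of $M$. If $x$ were not in $A$, then $A\subseteq M\setminus\{x\}$ would give $M=\mathcal V(A)\subseteq \mathcal V(M\setminus\{x\})=M\setminus\{x\}$, contradicting $x\in M$. Hence $x\in A$, and $x$ is a minimal $\mathcal V$-generator.

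For the converse, let $x$ be a minimal $\mathcal V$-generator, so $x\in A$. By Lemma~\ref{carac-v-sistemas-minimales}, $x\notin \mathcal V(A\setminus\{x\})$, so there exists $\Lambda_0\in \mathcal V$ with $A\setminus\{x\}\subseteq \Lambda_0$ and $x\notin \Lambda_0$. I iterate V2: while $\mathrm F(\Lambda_i)>x$, set $\Lambda_{i+1}=\Lambda_i\cup\{\mathrm F(\Lambda_i)\}$. Since at every step $x$ remains a gap, the Frobenius numbers form a strictly decreasing sequence of integers bounded below by $x$, so after finitely many steps one arrives at some $\Lambda\in \mathcal V$ with $\mathrm F(\Lambda)=x$. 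This $\Lambda$ contains every integer greater than $x$, still contains $A\setminus\{x\}$, and omits $x$. To show $M\setminus\{x\}\subseteq \Lambda$, take any $y\in M\setminus\{x\}$: if $y>x$ then $y\in \Lambda$ at once, while if $y<x$, Lemma~\ref{llave-para-unicidad-v-sistemas-minimales} applied to $y\in M=\mathcal V(A)$ gives $y\in \mathcal V(\{a\in A:a\le y\})\subseteq \mathcal V(A\setminus\{x\})\subseteq \Lambda$. Combined with the trivial inclusion $\mathcal V(M\setminus\{x\})\subseteq \mathcal V(M)=M$, the existence of such a $\Lambda$ forces $\mathcal V(M\setminus\{x\})=M\setminus\{x\}$, so $M\setminus\{x\}$ is a $\mathcal V$-monoid.

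I expect the main obstacle to be the iterative step in the converse: one must be sure that the gap $x$ is not accidentally filled in and that the Frobenius numbers do not jump past $x$. Both facts rest on the elementary observation that if $\mathrm F(\Lambda_i)>x$ and $x\notin \Lambda_i$, then $x$ is a non-maximal gap of $\Lambda_i$, so $\Lambda_i\mapsto \Lambda_i\cup\{\mathrm F(\Lambda_i)\}$ leaves $x$ outside and produces a new Frobenius number still at least $x$; together with strict decrease, the process must terminate exactly at $\mathrm F=x$.
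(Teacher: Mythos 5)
Your proof is correct and follows essentially the route the paper intends (it defers to \cite[Proposition 7.28]{R-GS:libro}): the necessity direction via uniqueness of the minimal $\mathcal V$-generating system, and the sufficiency direction by showing $x\notin\mathcal V(M\setminus\{x\})$ using Lemma~\ref{carac-v-sistemas-minimales} together with Lemma~\ref{llave-para-unicidad-v-sistemas-minimales}. The only cosmetic difference is that you re-derive the separating semigroup $\Lambda$ by iterating Condition (V2) explicitly --- in effect inlining the argument already contained in the proof of Lemma~\ref{llave-para-unicidad-v-sistemas-minimales} --- which is fine, since $x$ being a minimal $\mathcal V$-generator guarantees $x>m$ and hence that (V2) is applicable at every step.
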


From this last result we easily obtain a slight modification of \cite[Corollary 7.29]{R-GS:libro}. The difference strives in the fact that for $\Lambda$ in a $m$-variety $\mathcal V$, $\Lambda\cup\{\mathrm F(\Lambda)\}$ is not in $\mathcal V$ for $\Lambda=\{0,m,\to\}$. As we explain next, this result is used to arrange the elements of $\mathcal V$ in a tree.

\begin{corollary}
Let $\Lambda$ be a $\mathcal V$-monoid. The following are equivalent.
\begin{enumerate}[1)]
\item $\Lambda=\Lambda'\cup \{\mathrm F(\Lambda')\}$ for some $\Lambda'\in \mathcal V$.
\item The minimal $\mathcal V$-generating system of $\Lambda$ contains an element larger than $\mathrm F(\Lambda)$.
\end{enumerate}
\end{corollary}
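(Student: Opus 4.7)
The approach is to prove both implications by manipulating $\Lambda\setminus\{x\}$ and invoking Corollary~\ref{caracterizacion-v-generador-minimal}, paying close attention to the fact that the statement requires $\Lambda'\in\mathcal V$ rather than merely a $\mathcal V$-monoid.

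For (1)$\Rightarrow$(2), I set $x:=\mathrm F(\Lambda')$. Since $x\notin\Lambda'$ while $x\in\Lambda$, we have $\Lambda\setminus\{x\}=\Lambda'$, and because $\Lambda'\in\mathcal V$ is in particular a $\mathcal V$-monoid, Corollary~\ref{caracterizacion-v-generador-minimal} yields that $x$ is a minimal $\mathcal V$-generator of $\Lambda$. Every integer larger than $\mathrm F(\Lambda')$ already lies in $\Lambda'\subseteq\Lambda$, so $\mathrm F(\Lambda)<\mathrm F(\Lambda')=x$, giving a minimal $\mathcal V$-generator strictly above $\mathrm F(\Lambda)$.

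For (2)$\Rightarrow$(1), I take a minimal $\mathcal V$-generator $x$ of $\Lambda$ with $x>\mathrm F(\Lambda)$ and set $\Lambda':=\Lambda\setminus\{x\}$. Corollary~\ref{caracterizacion-v-generador-minimal} ensures $\Lambda'$ is a $\mathcal V$-monoid. Every integer $y>x$ satisfies $y>\mathrm F(\Lambda)$ and $y\neq x$, so $y\in\Lambda'$; together with $x\notin\Lambda'$ this identifies $\mathrm F(\Lambda')=x$ and shows $\Lambda'$ is a numerical semigroup. The conclusion $\Lambda=\Lambda'\cup\{\mathrm F(\Lambda')\}$ will be immediate once $\Lambda'\in\mathcal V$ is verified.

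The main point of care is precisely this last upgrade from $\mathcal V$-monoid to element of $\mathcal V$. I would write $\Lambda'=\bigcap_{i\in I}\Lambda_i$ with $\Lambda_i\in\mathcal V$; for each of the finitely many gaps $g$ of $\Lambda'$ there is some index $i(g)$ with $g\notin\Lambda_{i(g)}$, and the finite subintersection $\bigcap_g\Lambda_{i(g)}$ contains $\Lambda'$ while missing every gap of $\Lambda'$, hence coincides with $\Lambda'$. Iterating axiom (V1) then gives $\Lambda'\in\mathcal V$. This is the only place where the proof departs from \cite[Corollary 7.29]{R-GS:libro}, and it is what the finite-generation observation at the end of Remark~\ref{propiedades-v-monoides} is designed to support.
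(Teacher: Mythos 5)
Your proof is correct and follows essentially the same route as the paper's: both directions come down to applying Corollary~\ref{caracterizacion-v-generador-minimal} to $\Lambda\setminus\{x\}$ with $x=\mathrm F(\Lambda')$, together with the observation that removing an element larger than $\mathrm F(\Lambda)$ produces a semigroup whose Frobenius number is that element. The only difference is that you explicitly justify the upgrade from ``$\Lambda\setminus\{x\}$ is a $\mathcal V$-monoid'' to ``$\Lambda\setminus\{x\}\in\mathcal V$'' via the finite-subintersection argument using (V1); the paper leaves this step implicit, and your filling of that gap is correct.
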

\begin{proof}
\emph{1) implies 2).} Clearly $\Lambda'=\Lambda\setminus \{\mathrm F(\Lambda')\}\in \mathcal V$, and by Corollary \ref{caracterizacion-v-generador-minimal} we deduce that $\mathrm F(\Lambda')$ is a minimal $\mathcal V$-generator of $\Lambda$. Notice that $\mathrm F(\Lambda)<\mathrm F(\Lambda')$.

\emph{2) implies 1).} If $x$ is a minimal $\mathcal V$-generator, then Corollary \ref{caracterizacion-v-generador-minimal} ensures that $\Lambda\setminus\{x\}\in \mathcal V$. If in addition  $x$ is larger than $\mathrm F(\Lambda)$, then $\mathrm F(\Lambda\setminus \{x\})=x$, and thus we can choose $\Lambda'=\Lambda\setminus\{x\}$.
\end{proof}

With these ingredients, for an $m$-variety $\mathcal V$, we can arrange all the elements of $\mathcal V$ in a tree rooted in $\{0,m,\to\}$. For a vertex $\Lambda\in\mathcal V$ with minimal $\mathcal V$-generating system $A$, its descendants are $\Lambda\setminus\{a\}$ for all $a\in A$ with $a\ge \mathrm F(\Lambda)$ (\cite[Theorem 7.30]{R-GS:libro}).  Observe also that from any vertex in the tree we can construct the path to $\{0,m,\to\}$ by applying as many times as required Condition V2. In \cite[Figure 1]{MED} the tree of maximal embedding dimension numerical semigroups with multiplicity 4 is shown up to genus 5. In view of \cite[Corollary 17]{MED} this tree has infinitely many elements. 

\section{Nonhomogeneous strongly admissible patterns}
We can now define nonhomogeneous strongly admissible patterns in a similar way as it was done in \cite{BA-GS}. Given an admissible pattern $p = \sum_{i=1}^n a_ix_i+a_0$, set $$p'=\left\{\begin{array}{lr}p - x_1 & \mbox{ if } a_1 > 1, \\ p(0,x_1,x_2,\ldots,x_{n-1}) & \mbox{ if } a_1=1.
\end{array} \right .$$ 
Observe that since $p$ is a pattern $a_1\neq 0$, and as we are choosing it to be admissible, by Lemmma \ref{l:sums-neg}, $a_1=\sigma_1\ge 0$.

Define for $p'$ the partial sums $\sigma'_j$ as in \eqref{eq:sigmas}.
A nonhomogeneous admissible pattern $p$ is said to be \emph{strongly} admissible if $\sigma'_j\geq 0$ for all possible $j$. 

Note that it can be the case that $p$ is strongly admissible although $p'$ is not admissible. As an example we can take the pattern $x_1+x_2-1$. In this case $p'=x_1-1$ is not admissible, but still, $p$ is considered to be strongly admissible.

We are going to prove that the set of numerical semigroups with given multiplicity $m$ admitting a strongly admissible pattern form an $m$-variety. To this end, we need to prove the following technical lemma that will also be used later.

\begin{lemma}
\label{l:k1}
Let $p$ be a strongly admissible pattern of length $n$ and let $m$ be a $p$-admissible multiplicity. Then for every sequence of integers $s_1\geq \cdots\geq s_n\geq m$, it holds that $p(s_1,\dots,s_n)\geq s_1\geq \cdots\geq s_n$.
\end{lemma}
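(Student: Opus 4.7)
The plan is to express $p(s_1,\dots,s_n) - s_1$ as a sum of manifestly nonnegative contributions. The key observation is that strong admissibility forces $\sigma_j \geq 1$ for every $j$, and once we know this, subtracting $s_1$ from the expansion \eqref{semi} of $p$ gives nonnegative coefficients on each $\delta_j$; the remaining constant part is nonnegative by the very definition of $p$-admissible multiplicity.

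First, I would show the claim $\sigma_j \geq 1$ for $j=1,\dots,n$ by case analysis on the definition of $p'$. If $a_1 > 1$, then $p' = p - x_1$ has partial sums $\sigma'_j = \sigma_j - 1$, so $\sigma'_j \geq 0$ translates directly to $\sigma_j \geq 1$. If $a_1 = 1$, then $p'(x_1,\dots,x_{n-1}) = \sum_{i=1}^{n-1} a_{i+1} x_i + a_0$ has partial sums $\sigma'_j = \sigma_{j+1} - 1$, so $\sigma'_j \geq 0$ for $j = 1,\dots,n-1$ gives $\sigma_2,\dots,\sigma_n \geq 1$, and we already have $\sigma_1 = a_1 = 1$. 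In either case $\sigma_j \geq 1$ for all $j$, and as a byproduct the option $\sigma_n = 0$ in the $p$-admissible cases never occurs.

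Next, I would set $\delta_j$ as in \eqref{eq:deltas}, noting $\delta_j \geq 0$ because $s_1\geq \cdots \geq s_n \geq m$, and use $s_1 = \sum_{j=1}^n \delta_j + m$ together with \eqref{semi} to write
\begin{equation*}
p(s_1,\dots,s_n) - s_1 \;=\; \sum_{j=1}^n (\sigma_j - 1)\delta_j \;+\; (\sigma_n - 1)m + a_0.
\end{equation*}
Each summand $(\sigma_j-1)\delta_j$ is nonnegative by the previous step. The constant piece $(\sigma_n-1)m+a_0$ is handled by the two remaining cases of \eqref{admmult}: if $\sigma_n > 1$, the inequality $m \geq -a_0/(\sigma_n-1)$ rearranges to $(\sigma_n-1)m + a_0 \geq 0$; if $\sigma_n = 1$, then by \eqref{admmult} we have $a_0 \geq 0$, and the constant piece equals $a_0 \geq 0$. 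Consequently $p(s_1,\dots,s_n) \geq s_1$, and the rest of the chain $s_1\geq s_2\geq \cdots \geq s_n$ is just the hypothesis.

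The only delicate step is the case analysis establishing $\sigma_j \geq 1$, since the definition of $p'$ branches on whether $a_1=1$ or $a_1>1$ and the indexing of the partial sums shifts in the second branch; once this is in hand, everything else is bookkeeping with \eqref{semi} and \eqref{admmult}.
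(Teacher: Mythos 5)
Your proof is correct and follows essentially the same route as the paper: both rest on the expansion \eqref{semi}, use strong admissibility to make the coefficients of the $\delta_j$ nonnegative after subtracting $s_1$, and invoke the definition of $p$-admissible multiplicity for the constant term $(\sigma_n-1)m+a_0$. Your reformulation of strong admissibility as ``$\sigma_j\geq 1$ for all $j$'' merely unifies the two branches ($a_1>1$ and $a_1=1$) that the paper treats separately through $p'$.
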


\begin{proof}
Assume that $p=\sum_{i=1}^na_ix_i+a_0$. 
If $a_1>1$, then $\sigma_n-1=\sigma_{n}'\ge 0$. Hence $\sigma_n\ge 1$. Define $\delta_j$ as in \eqref{eq:deltas}.
By using that $\sigma_j'\ge 0$, in view of \eqref{semi}, we get  $p'(s_1,\dots,s_n)=\sum_{j=1}^n\sigma'_j\delta_j+\sigma'_nm+a_0\geq \sigma'_nm+a_0=(\sigma_n-1)m+a_0$. For $\sigma_n=1$, since $p$ is admissible, Theorem \ref{t:adm} says that $a_0\ge 0$, and consequently $p'(s_1,\ldots,s_n)\geq a_0\ge 0$. If $\sigma_n>1$,  since $m$ is a $p$-admissible multiplicity, $m\ge \frac{-a_0}{\sigma_n-1}$, which leads to  $p'(s_1,\dots,s_n)\geq 0$.
In both cases, $p(s_1,\dots,s_n)=s_1+p'(s_1,\dots,s_n)\geq s_1$.

Now assume that $a_1=1$. In this setting, $\sigma_n-1=\sigma_{n-1}'$, and the proof follows as in the case $a_1>1$.
\end{proof}

%


\begin{lemma}
\label{l:k2} Let $p=\sum_{i=1}^na_ix_i+a_0$ be a strongly admissible pattern, and let $m$ be a $p$-admissible multiplcicity.
\begin{enumerate}
\item
If a nonordinary numerical semigroup $\Lambda$ of multiplicity $m$ admits $p$, then so does $\Lambda\cup \{\mathrm F(\Lambda)\}$.
\item The intersection of two numerical semigroups of multiplicity $m$ admitting $p$ also has multiplicity $m$ and admits $p$. 
\end{enumerate}
\end{lemma}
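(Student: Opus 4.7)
The plan is to derive item (1) from Lemma~\ref{l:k1} by a short case split, and to verify item (2) directly from the definitions.

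For (1), write $\Lambda' = \Lambda \cup \{\mathrm F(\Lambda)\}$. Since $\Lambda$ is nonordinary, $\mathrm F(\Lambda)>m$, so the multiplicity of $\Lambda'$ is still $m$, and in particular every nonzero element of $\Lambda'$ is $\ge m$. Fix a nonincreasing sequence $s_1\ge \cdots \ge s_n$ of nonzero elements of $\Lambda'$. I would split according to whether $\mathrm F(\Lambda)$ appears among the $s_i$. If no $s_i$ equals $\mathrm F(\Lambda)$, then all $s_i$ lie in $\Lambda$, and since $\Lambda$ admits $p$ we get $p(s_1,\ldots,s_n)\in \Lambda\subseteq \Lambda'$. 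Otherwise some $s_i = \mathrm F(\Lambda)$, and as the sequence is nonincreasing, $s_1\ge \mathrm F(\Lambda)$. Since all $s_j\ge m$, Lemma~\ref{l:k1} applies and yields $p(s_1,\ldots,s_n)\ge s_1\ge \mathrm F(\Lambda)$. But every integer greater than or equal to $\mathrm F(\Lambda)$ belongs to $\Lambda'$, so $p(s_1,\ldots,s_n)\in \Lambda'$, as required. This is the only step that genuinely uses strong admissibility, and it is precisely what Lemma~\ref{l:k1} was designed for.

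For (2), let $\Lambda_1,\Lambda_2\in \mathcal S_m(p)$ and set $\Lambda = \Lambda_1\cap \Lambda_2$. I would first note that $\mathbb N_0\setminus \Lambda\subseteq (\mathbb N_0\setminus \Lambda_1)\cup(\mathbb N_0\setminus \Lambda_2)$ is finite, so $\Lambda$ is a numerical semigroup. The multiplicity of $\Lambda$ equals $m$ because $m\in \Lambda_1\cap \Lambda_2=\Lambda$ while $\{1,\ldots,m-1\}\cap \Lambda_i=\emptyset$ for $i=1,2$. For admissibility, any nonincreasing sequence of nonzero elements of $\Lambda$ is also such a sequence in each $\Lambda_i$, so $p$ evaluated on it belongs to $\Lambda_1\cap \Lambda_2=\Lambda$.

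The only nontrivial point is the use of Lemma~\ref{l:k1} in (1); without strong admissibility, merely knowing that $\Lambda$ admits $p$ does not by itself force $p(s_1,\ldots,s_n)\ge \mathrm F(\Lambda)$ when $\mathrm F(\Lambda)$ appears as an input, which is exactly why this theorem is stated for strongly admissible patterns rather than just admissible ones.
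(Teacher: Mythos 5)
Your proof is correct and follows the same route as the paper, which simply states that item (1) follows from Lemma~\ref{l:k1} and that item (2) is immediate from the definitions; your case split on whether $\mathrm F(\Lambda)$ occurs among the $s_i$ is exactly the intended way to apply Lemma~\ref{l:k1}.
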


\begin{proof}
The first statement follows from Lemma~\ref{l:k1} while the second 
statement is immediate from the definitions.
\end{proof}

\begin{theorem}
\begin{enumerate}
\item
Given a strongly admissible pattern $p$ and a $p$-admissible multiplicity $m$, the set of all semigroups with multiplicity $m$ admitting $p$ is an $m$-variety.
\item
Given a set of strongly admissible patterns $p_1,\dots,p_r$ and a multiplicity $m$ that is $p_i$-admissible for all $p_i\in\{p_1,\dots,p_r\}$, the set of all semigroups with multiplicity $m$ admitting simultaneously $p_1,\dots,p_r$ is an $m$-variety.
\end{enumerate}
\end{theorem}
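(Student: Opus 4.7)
The plan is to verify axioms (V1) and (V2) directly, using Lemma \ref{l:k2} as the workhorse, and to deduce part (2) from part (1) together with the observation that the intersection of $m$-varieties is an $m$-variety.

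For part (1), write $\mathcal{V}=\mathcal{S}_m(p)$ for the set of numerical semigroups of multiplicity $m$ admitting $p$. First I would note that $\mathcal{V}$ is nonempty and in fact contains $\{0,m,\to\}$: since $m$ is a $p$-admissible multiplicity, Theorem \ref{t:adm} (via Lemma \ref{l:sufcondforordinary}) guarantees that the ordinary semigroup with multiplicity $m$ admits $p$. Axiom (V1) is exactly Lemma \ref{l:k2}(2): if $\Lambda_1,\Lambda_2\in\mathcal{V}$, then $\Lambda_1\cap\Lambda_2$ contains $m$ as its smallest nonzero element (since both do and $m\in\Lambda_1\cap\Lambda_2$), is a numerical semigroup, and admits $p$ because any nonincreasing tuple of nonzero elements of the intersection lies in each $\Lambda_i$, hence its evaluation under $p$ lies in both, hence in the intersection. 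Axiom (V2) is exactly Lemma \ref{l:k2}(1): for $\Lambda\in\mathcal{V}$ with $\Lambda\neq\{0,m,\to\}$, the set $\Lambda\cup\{\mathrm F(\Lambda)\}$ is again a numerical semigroup with the same multiplicity $m$ (since $\mathrm F(\Lambda)>m$) and admits $p$.

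Part (2) follows formally. Let $\mathcal{V}_i=\mathcal{S}_m(p_i)$ for $i=1,\ldots,r$; by part (1), each $\mathcal{V}_i$ is an $m$-variety. The set of semigroups of multiplicity $m$ admitting $p_1,\dots,p_r$ simultaneously is precisely $\bigcap_{i=1}^r \mathcal{V}_i$. This intersection contains $\{0,m,\to\}$ (which lies in each $\mathcal{V}_i$), is closed under binary intersections (being the intersection of sets each closed under intersection), and is closed under adjoining the Frobenius number for every non-ordinary member (since every $\mathcal{V}_i$ is). Hence it is an $m$-variety.

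I do not expect any real obstacle: both axioms reduce immediately to the two parts of Lemma \ref{l:k2}, and the only subtlety worth spelling out is that the multiplicity is preserved under intersection and under adjoining the Frobenius number, which both hold because $m$ remains the smallest nonzero element in either construction.
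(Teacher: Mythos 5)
Your proposal is correct and follows essentially the same route as the paper, which states this theorem as an immediate consequence of Lemma~\ref{l:k2} (the two parts of that lemma are precisely axioms (V2) and (V1) for $\mathcal S_m(p)$, and nonemptiness follows from Theorem~\ref{t:adm}). Your derivation of part (2) as an intersection of $m$-varieties containing $\{0,m,\to\}$ is the intended formal argument.
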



\begin{example}
We would like to 
highlight the beauty of the patterns $x_1+x_2+1$ and $x_1+x_2-1$ and their relationship with the intervals of nongaps of a numerical semigroup.
Indeed, the semigroups admitting $x_1+x_2+1$ can be characterized by the fact that the maximum element in each interval of nongaps is a minimal generator. 
Similarly, the semigroups admitting $x_1+x_2-1$ can be characterized by the fact that the minimum element in each interval of nongaps is a minimal generator. Figure~\ref{fig:xym1} represents the (finite) tree of all numerical semigroups admitting the pattern $x_1+x_2-1$ with multiplicity five.

\begin{figure}

\caption{The tree of all numerical semigroups admitting the pattern $x_1+x_2-1$ with multiplicity five.} 
\label{fig:xym1}
\medskip

    \centerline{
      \xymatrix @R=1pc @C=1pc{
      & \langle 5,6,7,8,9\rangle & \\
      \langle 5, 7, 8, 9, 11\rangle\ar@{-}[ur] & \langle 5,6,8,9\rangle\ar@{-}[u] &  \langle 5,6,7,9\rangle\ar@{-}[ul] \\
      \langle 5, 8, 9, 11, 12\rangle\ar@{-}[u] & \langle 5,7,9,11,13\rangle\ar@{-}[ul] &  \langle 5,6,9,13\rangle\ar@{-}[ul] \\
      \langle 5, 8, 9, 12\rangle\ar@{-}[u] & \langle 5,9, 11, 12, 13\rangle\ar@{-}[ul] &\\
       & \langle 5,9, 11, 13, 17 \rangle\ar@{-}[u] & \langle 5,  9, 12, 13, 16 \rangle\ar@{-}[ul]\\
      & &\langle  5, 9, 13, 16, 17 \rangle\ar@{-}[u] \\
      & &\langle  5, 9, 13, 17, 21 \rangle\ar@{-}[u] \\
      }
      }
      
\end{figure}
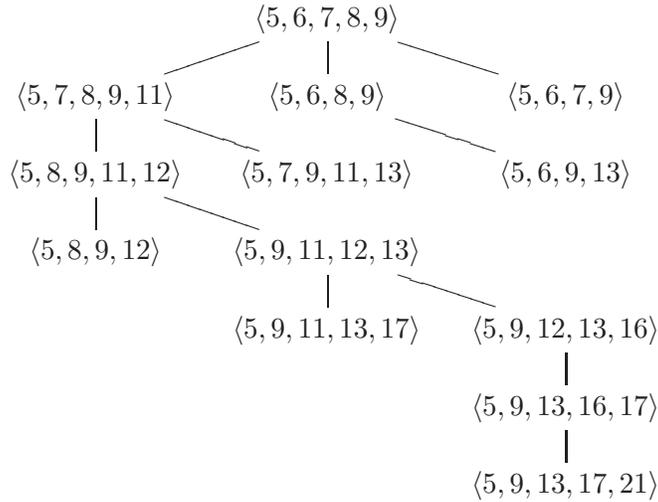
\end{example}

This example gives rise to a natural question. When is the tree of numerical semigroups with fixed multiplicity and admitting a strongly admissible pattern finite? The answer to this question is given in the following result.

\begin{theorem}
Let $p=\sum_{i=1}^na_ix_i+a_0$ be a strongly admissible pattern, and let $m$ be a $p$-admissible multiplicity. Then $\mathcal S_m(p)$ contains 
infinitely many numerical semigroups if and only if  $\gcd(m,a_0)\neq 1$.
\end{theorem}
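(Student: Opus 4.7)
The plan is to handle the two implications separately, with an explicit construction for $(\Leftarrow)$ and a uniform Frobenius bound for $(\Rightarrow)$.

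For $\gcd(m,a_0)\neq 1 \Rightarrow \mathcal S_m(p)$ is infinite, set $d=\gcd(m,a_0)\geq 2$ and, for each $k\geq 0$, I would study the candidate
$$\Lambda_k = \{0\}\cup\{m+id : 0\leq i\leq k\}\cup\{s\in\mathbb N_0 : s\geq m+kd+1\}.$$
Since $d\mid m$, the sum of two regular elements can be rewritten as $(m+id)+(m+jd)=m+(m/d+i+j)d$, which either stays in the progression $\{m+i'd:0\leq i'\leq k\}$ or jumps past $m+kd$ into the tail; this makes $\Lambda_k$ a numerical semigroup of multiplicity $m$. To verify $\Lambda_k\in\mathcal S_m(p)$, consider any nonincreasing sequence $s_1\geq\cdots\geq s_n\geq m$ of elements of $\Lambda_k$. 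If some $s_i$ lies in the tail, so does $s_1$ by the ordering, and Lemma~\ref{l:k1} gives $p(s_1,\dots,s_n)\geq s_1\geq m+kd+1$. Otherwise every $s_i=m+j_id$ is regular, and
$$p(s_1,\dots,s_n)=\sigma_n m+d\sum_{i=1}^n a_ij_i+a_0$$
is a multiple of $d$ (since $d\mid m$ and $d\mid a_0$) that is at least $m$ by Lemma~\ref{l:k1}; it is therefore either a multiple of $d$ in $[m,m+kd]$ (necessarily of the form $m+id$) or it exceeds $m+kd$ and enters the tail. Since $d\geq 2$, the integer $m+kd-1$ is a gap of $\Lambda_k$, so $\mathrm F(\Lambda_k)=m+kd-1\to\infty$, exhibiting infinitely many pairwise distinct members of $\mathcal S_m(p)$.

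For the converse, assume $\gcd(m,a_0)=1$ and let $\Lambda\in\mathcal S_m(p)$. Evaluating $p$ at the constant tuple $(m,\dots,m)$ yields $w:=\sigma_n m+a_0\in\Lambda$, with $w\geq m>0$ by Lemma~\ref{l:k1}. Closure under addition then forces $\{jw : 0\leq j\leq m-1\}\subseteq\Lambda$, and because $\gcd(a_0,m)=1$ the residues $jw\equiv ja_0\pmod m$ exhaust $\mathbb Z/m\mathbb Z$ as $j$ runs over $\{0,\dots,m-1\}$. Hence every element of the Ap\'ery set $\mathrm{Ap}(\Lambda,m)$ is bounded by $(m-1)w$, yielding the uniform estimate
$$\mathrm F(\Lambda)=\max\mathrm{Ap}(\Lambda,m)-m \leq (m-1)(\sigma_n m+a_0)-m$$
independent of $\Lambda$. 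Only finitely many numerical semigroups have Frobenius number below a fixed constant, so $\mathcal S_m(p)$ is finite.

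The most delicate step is checking that the family $\Lambda_k$ absorbs $p$; the key observation is that the two divisibilities $d\mid m$ and $d\mid a_0$ cooperate so that applying $p$ to regular inputs produces a multiple of $d$, and the lower bound from Lemma~\ref{l:k1} then forces this value into the arithmetic progression $\{m+id\}$ or into the tail, but never into a gap of $\Lambda_k$.
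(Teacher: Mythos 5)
Your proof is correct and follows essentially the same route as the paper: for sufficiency you build the same family of semigroups (multiples of $d$ in an arithmetic progression starting at $m$, capped by a tail) and use Lemma~\ref{l:k1} to push $p$-values into the progression or the tail, and for necessity you exploit $\gcd(m,p(m,\dots,m))=1$ to bound the Frobenius number uniformly, which is just an explicit version of the paper's observation that only finitely many numerical semigroups contain $\langle m,p(m,\dots,m)\rangle$. The only (cosmetic) difference is that you split cases on whether $s_1$ lies in the tail, which is in fact slightly cleaner than the paper's phrasing.
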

\begin{proof}
\emph{Necessity.} Assume that $\gcd(m,a_0)=1$. Then $\gcd(m, p(m,\ldots,m))=1$, and consequently $\langle m,p(m,\ldots, m)\rangle$ is a numerical semigroup. If $\Lambda \in \mathcal S_m(p)$, then $\langle m,p(m,\ldots, m)\rangle \subseteq \Lambda$. Since there are finitely many numerical semigroups containing $\langle m,p(m,\ldots, m)\rangle$, we deduce that the cardinality of $\mathcal S_m(p)$ is finite.

\emph{Sufficiency.} Suppose now that $\gcd(m,a_0)=d\neq 1$, and let  $m_0=m/d$. Then for any $k\geq m$, the numerical semigroup $\Lambda_k=\{di: i \in{\mathbb N}_0, i\geq m_0\}\cup\{0,k,\to\}$ has multiplicity $m$ and admits $p$. Indeed, let $s_1,\dots,s_n$ be a nondecreasing sequence of nonzero elements of $\Lambda_k$. By Lemma~\ref{l:k1}, $p(s_1,\dots,s_n)$ is at least $s_n$. If $s_n\geq k$, then obviously $p(s_1,\dots,s_n)\in\Lambda_k$. Otherwise, $s_1,\dots,s_n$ are multiples of $d$ and so is $p(s_1,\dots,s_n)$. Now since $p(s_1,\dots,s_n)$ is a positive multiple of $d$ larger than or equal to $m$, $p(s_1,\dots,s_n)\in\Lambda_k$.
\end{proof}


\end{document}